\ifpdf\usepackage[colorlinks]{hyperref} 
\else\usepackage[hypertex]{hyperref}\fi 
\newcommand{\walter}[1]{\marginpar{\sffamily{\noindent\tiny W: #1
\par}\normalfont}}
\renewcommand{\walter}[1]{}
\newbox\mybox
\def\overtag#1#2#3{\setbox\mybox\hbox{$#1$}\hbox to
  0pt{\vbox to 0pt{\vglue-#3\vglue-\ht\mybox\hbox to \wd\mybox
      {\hss$\ss#2$\hss}\vss}\hss}\box\mybox}
\def\undertag#1#2#3{\setbox\mybox\hbox{$#1$}\hbox to 0pt{\vbox to
    0pt{\vglue#3\vglue\ht\mybox\hbox to \wd\mybox
      {\hss$\ss#2$\hss}\vss}\hss}\box\mybox}
\def\lefttag#1#2#3{\hbox to 0pt{\vbox to 0pt{\vss\hbox to
      0pt{\hss$\ss#2$\hskip#3}\vss}}#1}
\def\righttag#1#2#3{\hbox to 0pt{\vbox to 0pt{\vss\hbox to
      0pt{\hskip#3$\ss#2$\hss}\vss}}#1}
\let\ss\scriptstyle
\def\Dot{\lower.2pc\hbox to 2.5pt{\hss$\bullet$\hss}}
\def\Circ{\lower.2pc\hbox to 2.5pt{\hss$\circ$\hss}}
\def\Vdots{\raise5pt\hbox{$\vdots$}}
\def\splicediag#1#2{\xymatrix@R=#1pt@C=#2pt@M=0pt@W=0pt@H=0pt}
\newcommand\lineto{\ar@{-}}
\newcommand\dashto{\ar@{--}}
\newcommand\dotto{\ar@{.}}
\newcommand{\C}{{\mathbb C}}
\newcommand{\Z}{{\mathbb Z}}
\newcommand{\R}{{\mathbb R}}
\newcommand{\Rplus}{{\mathbb R_+^*}}
\newtheorem{itheorem}{Theorem}
\newtheorem{icorollary}[itheorem]{Corollary}
\newtheorem{theorem}{Theorem}[section]
\newtheorem{lemma}[theorem]{Lemma}
\theoremstyle{definition}
\begin{document}
\title{Bi-Lipschitz geometry of weighted homogeneous surface singularities}
\author{Lev Birbrair}\thanks{Research supported under CNPq grant
no 300985/93-2}
\address{Departamento de Matem\'atica, Universidade Federal do Cear\'a
(UFC), Campus do Picici, Bloco 914, Cep. 60455-760. Fortaleza-Ce,
Brasil} \email{birb@ufc.br}
\author{Alexandre Fernandes}\thanks{Research supported under CNPq
grant no 300393/2005-9}
\address{Departamento de Matem\'atica, Universidade Federal do Cear\'a
(UFC), Campus do Picici, Bloco 914, Cep. 60455-760. Fortaleza-Ce,
Brasil} \email{alex@mat.ufc.br}
\author{Walter D.
  Neumann} \thanks{Research supported under NSA grant H98230-06-1-011
  and NSF grant no.\
  DMS-0206464} \address{Department of Mathematics, Barnard College,
  Columbia University, New York, NY 10027}
\email{neumann@math.columbia.edu}

\subjclass{} \keywords{bi-Lipschitz, complex surface singularity}

\begin{abstract}
  We show that a weighted homogeneous complex surface singularity is
  metrically conical (i.e., bi-Lipschitz equivalent to a metric cone)
  only if its two lowest weights are equal. We also give an example of
  a pair of weighted homogeneous complex surface singularities that
  are topologically equivalent but not bi-Lipschitz equivalent.
\end{abstract}

\maketitle

\section{Introduction and main results}
A natural question of metric theory of singularities is the existence
of a metrically conical structure near a singular point of an
algebraic set. For example, complex algebraic curves, equipped with
the inner metric induced from an embedding in $\C^N$, always have
metrically conical singularities. It was discovered recently (see
\cite {BF}) that weighted homogeneous complex surface singularities
are not necessarily metrically conical. In this paper we 
show that they are rarely metrically conical.

Let $(V,p)$ be a normal complex surface singularity germ. Any set
$z_1,\dots,z_N$ of generators for $\mathcal O_{(V,p)}$ induces an
embedding of germs $(V,p)\to (\C^N,0)$. The Riemannian metric on
$V-\{p\}$ induced by the standard metric on $\C^N$ then gives a metric
space structure on the germ $(V,p)$.  This metric space structure, in
which distance is given by arclength within $V$, is called the
\emph{inner metric} (as opposed to \emph{outer metric} in which
distance between points of $V$ is distance in $\C^N$).

It is easy to see that, up to bi-Lipschitz equivalence, this inner
metric is independent of choices. It depends strongly on the
analytic structure, however, and may not be what one first expects.
For example, we shall see that if $(V,p)$ is a quotient singularity
$(V,p) = (\C^2/G,0)$, with $G\subset U(2)$ finite acting freely, then
this metric is usually not bi-Lipschitz equivalent to the conical
metric induced by the standard metric on $\C^2$.

If $M$ is a smooth compact manifold then a \emph{cone on $M$} will
mean the cone on $M$ with a standard Riemannian metric off the cone
point. This is the metric completion of the Riemannian manifold
$\R_+\times M$  with metric given (in terms of an element of arc
length) by $ds^2=dt^2+t^2ds_M^2$ where $t$ is the coordinate on
$\R_+$ and $ds_M$ is given by any Riemannian metric on $M$.
It is easy to see that this metric completion simply adds a single
point at $t=0$ and, up to bi-Lipschitz equivalence, the metric on the
cone is independent of choice of metric on $M$.  

If $M$ is the link of
an isolated complex singularity $(V,p)$ then the germ $(V,p)$ is
homeomorphic to the germ of the cone point in a cone $CM$. If this
homeomorphism can be chosen to be bi-Lipschitz we say, following
\cite{BGM}, that the germ $(V,p)$ is \emph{metrically conical}.
In \cite{BGM} the approach taken is to 
consider a semialgebraic triangulation of $V$ and consider the star of
$p$ according to this triangulation. The point $p$ is metrically
conical if the intersection $V\cap B_{\epsilon}[p]$ is bi-Lipschitz
homeomorphic to the star of $p$, considered with the standard metric
of the simplicial complex.

Suppose now that $(V,p)$ is weighted homogeneous. That is, $V$
admits a good $\C^*$--action (a holomorphic action with positive
weights: each orbit $\{\lambda x~|~\lambda\in \C^*\}$ approaches
zero as $\lambda\to 0$).  The weights $v_1, \dots, v_r$ of a minimal
set of homogeneous generators of the graded ring of $V$ are called
the \emph{weights of $V$}. We shall order them by size,
$v_1\ge\dots\ge v_r$, so $v_{r-1}$ and $v_r$ are the two lowest
weights. 

If $(V,p)$ is a cyclic quotient singularity $V=\C^2/\mu_n$ (where
$\mu_n$ denotes the $n$--th roots of unity) then it does not have a
unique $\C^*$--action. In this case we use the $\C^*$--action induced by the
diagonal action on $\C^2$.

If $(V,p)$ is homogeneous, that is, the weights $v_1,\dots, v_r$ are
all equal, then  it is easy to see that $(V,p)$ is metrically conical.

\begin{itheorem} \label{th:1} If the
  two lowest weights of $V$ are unequal then $(V,p)$ is not metrically conical.
\end{itheorem}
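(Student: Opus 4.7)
The plan is a proof by contradiction. Assume $(V,p)$ is bi-Lipschitz equivalent to the metric cone $CM$ on its link $M$, via some equivalence $\Psi\colon V\to CM$. I will use the $\C^*$-action on $V$ to produce a one-parameter family of pairs of points whose metric behaviour near $p$ is incompatible with any cone structure.

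The main tool is the $\C^*$-action: for $\lambda\in\R_+$, $\phi_\lambda\colon V\to V$ sends a point $q$ with homogeneous coordinates $(z_1(q),\dots,z_r(q))$ (where $z_i$ has weight $v_i$) to $(\lambda^{v_1}z_1(q),\dots,\lambda^{v_r}z_r(q))$. First I would pick a smooth base point $q_0\in V$ with $z_r(q_0)\neq 0$, so that $|\phi_\lambda(q_0)|\asymp\lambda^{v_r}$ as $\lambda\to 0$; reparametrising by the Euclidean radius $t\asymp\lambda^{v_r}$ gives $\lambda(t)\asymp t^{1/v_r}$. For a generic such $q_0$ the tangent plane $T_{q_0}V$ contains a nonzero vector $v=(v_1',\dots,v_r')$ with $v_r'=0$ and $v_{r-1}'\neq 0$. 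Extend this direction to a curve $s\mapsto q_s\in V$ ($s\in(-\delta,\delta)$) with $q_0$ at $s=0$ and $\partial_s q_s|_{s=0}=v$.

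The key estimate is $|\phi_{\lambda(t)}(q_s)-\phi_{\lambda(t)}(q_0)|\asymp s\,\lambda^{v_{r-1}}=s\,t^{v_{r-1}/v_r}$, valid in the regime $s\ll t^{(v_{r-1}-v_r)/v_r}$. To leading order in $s$, the $z_i$-component of the separation is $s\lambda^{v_i}v_i'$ for $i<r$ and of order $s^2\lambda^{v_r}$ for $i=r$ (since $v_r'=0$). Among the linear contributions the $i=r-1$ term dominates because $\lambda^{v_{r-1}}\geq\lambda^{v_i}$ for $i\leq r-1$; the quadratic $z_r$-term is subdominant in the stated range of $s$. Because $V$ is smooth away from $p$, inner and outer distances are comparable on this scale, so $d_V(\phi_{\lambda(t)}(q_s),\phi_{\lambda(t)}(q_0))\asymp s\,t^{v_{r-1}/v_r}$. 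Since $v_{r-1}/v_r>1$, for each fixed $s$ this distance is $o(t)$ while both points remain at Euclidean distance $\asymp t$ from $p$.

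Transferring via $\Psi$ to $CM$, each pair sits at cone-distance $\asymp t$ from the apex with mutual distance $\asymp s\,t^{v_{r-1}/v_r}$; the cone metric then forces an angular separation on the link $M$ of order $s\,t^{(v_{r-1}-v_r)/v_r}\to 0$. Thus every arc $\Psi(\phi_{\lambda(\cdot)}(q_s))$ shares the same limiting radial direction on $M$ as $\Psi(\phi_{\lambda(\cdot)}(q_0))$, even though the $s$-family of $\C^*$-orbits fills a genuinely two-dimensional piece of $V$ near $p$. The main obstacle is to convert this forced collapse into a quantitative contradiction: the expected route is to compare a bi-Lipschitz invariant (for instance the $2$-dimensional Hausdorff density at $p$ of a sector bounded by two $\C^*$-orbits in $V$) with what a cone structure whose radial fibres all collapse to a single direction can accommodate, a step that will need a careful choice of sector together with a sharp lower bound on inner distances in $V$ to match the upper bound established above.
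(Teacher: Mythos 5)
Your metric estimates reproduce the quantitative heart of the paper's argument: points in the slice $\{z_r=\text{const}\}$ that are pushed toward $p$ by the $\Rplus$--action stay at distance $\asymp t^{v_r}$ from $p$ while their mutual distances shrink like $t^{v_{r-1}}$, i.e.\ strictly faster than linearly in their distance to the cone point. But the step you flag as ``the main obstacle'' is exactly where the proof lives, and the route you sketch for it does not work. Local collapse of a family of radial arcs is perfectly consistent with being a metric cone: already in $\R^2=C(S^1)$ the region between $y=0$ and $y=x^2$ is a two-dimensional ``sector'' bounded by two arcs reaching the origin whose cross-sections shrink superlinearly and whose $2$--dimensional density at the origin is zero. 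So neither the forced angular collapse nor the Hausdorff density of a sector can yield a contradiction; a bi-Lipschitz map is free to send your sector onto a cuspidal region of the cone.

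The missing idea is global and topological. The paper takes a \emph{closed} curve $\gamma$ in the hyperplane section $V_0=\{z_r=1\}$ that is homotopically nontrivial in $V-\{p\}$; the existence of such a curve is the content of Lemma \ref{le:1} and is the real work of the paper (it requires analyzing the orbifold fundamental group of $(V-\{p\})/\C^*$ and treating cyclic quotient singularities separately --- it can genuinely fail for homogeneous cyclic quotients). Given $\gamma$, your own estimate shows $t\gamma$ has length $O(t^{v_{r-1}})$, so after applying the bi-Lipschitz map $h$ and rescaling by $t^{-v_r}$ in the cone one gets closed curves of length $O(t^{v_{r-1}-v_r})\to0$ that remain at distance $\ge 1/K$ from the cone point. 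This produces a null-homotopy of $\gamma$ in the punctured cone, hence in $V-\{p\}$, contradicting essentiality. Without an essential closed curve in the slice, the superlinear collapse alone proves nothing; with it, no density or lower inner-distance bounds are needed.
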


For example, the Kleinian singularities $A_k, k\ge 1$, $D_k, k\ge 4$,
$E_6$, $E_7$, $E_8$ are the quotient singularities $\C^2/G$ with
$G\subset SU(2)$ finite.  The diagonal action of $\C^*$ on $\C^2$
induces an action on $\C^2/G$, so they are weighted homogeneous. They
are the weighted homogeneous hypersurface singularities:

\smallskip\begin{tabular}{lrl}
&\bf equation\hbox to12pt{\hss}&\bf \hbox to12pt{\hss}weights\\
$A_k:$&$\quad x^2+y^2+z^{k+1}=0$&$(k+1,k+1,2)$ or
$(\frac{k+1}2,\frac{k+1}2,1)$\\
$D_k$:&$\quad x^2+y^2z+z^{k-1}=0$&$(k-1,k-2,2)$, $k\ge 4$\\
$E_6$:&$x^2+y^3+z^4=0$&$(6,4,3)$\\
$E_7$:&$x^2+y^3+yz^3=0$&$(9,6,4)$\\
$E_8$:&$x^2+y^3+z^5=0$&$(15,10,6)$    
\end{tabular}

\smallskip\noindent By the theorem, none of them is metrically conical
except for the quadric $A_1$ and possibly\footnote{We have a
  tentative proof that the quaternion quotient is metrically conical,
  see \cite{inprep}.} the quaternion group quotient $D_4$.


The general cyclic quotient singularity is of the form $V=\C^2/\mu_n$
where the $n$--th roots of unity act on $\C^2$ by
$\xi(u_1,u_2)=(\xi^q u_1,\xi u_2)$ for some $q$ prime to $n$ with
$0<q<n$; the link of this singularity is the lens space $L(n,q)$. It
is homogeneous if and only if $q=1$.

\begin{itheorem}\label{th:1.5}
  A cyclic quotient singularity is metrically conical if and only if
  it is homogeneous.
\end{itheorem}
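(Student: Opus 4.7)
The \emph{if} direction is immediate. When $q=1$ the action of $\mu_n$ on $\C^2$ is by scalars, so $\C[u_1,u_2]^{\mu_n}$ is generated by the monomials $u_1^i u_2^{n-i}$ for $0\le i\le n$, each of weight $n$ under the diagonal $\C^*$-action. Hence $V$ is homogeneous and therefore metrically conical, as remarked before Theorem~\ref{th:1}.

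For the converse, assume $q\neq 1$; the plan is to analyze the weights of the minimal generators of $\C[u_1,u_2]^{\mu_n}$ under the diagonal action and invoke Theorem~\ref{th:1}. These generators are the monomials $u_1^a u_2^b$ with $(a,b)$ a vertex of the Newton boundary of the semigroup $S=\{(a,b)\in\Z_{\ge 0}^2 : qa+b\equiv 0\pmod{n}\}\setminus\{(0,0)\}$, each of weight $a+b$. Writing $n=mq+r$ with $0<r<q$ (so $r\ge 1$ since $\gcd(q,n)=1$ forces $r\neq 0$ when $q>1$), the point $(m,r)$ lies in $S$ and no nonzero lattice point of $S$ is weakly below and to the left of it, so $(m,r)$ is a vertex of weight $m+r<n$; meanwhile $(0,n)$ and $(n,0)$ are always vertices, of weight $n$. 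In particular, the weights of $V$ are not all equal.

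If the minimum weight $m+r$ is attained only by the vertex $(m,r)$, the two lowest weights of $V$ are distinct and Theorem~\ref{th:1} immediately yields that $V$ is not metrically conical.

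The main obstacle is the remaining case, where two or more vertices of the Newton boundary share the minimum weight---for instance $n=8$, $q=5$, where both $(1,3)$ and $(3,1)$ have weight $4$. Here Theorem~\ref{th:1} as stated does not apply directly. Such coincidences are governed by the Hirzebruch--Jung continued fraction of $n/q$ and correspond to essentially self-dual cyclic quotients. My strategy for dispatching this case is to refine the argument behind Theorem~\ref{th:1}: even with the two lowest weights tied, the vertices $(0,n)$ and $(n,0)$ still have strictly greater weight $n$, and the differing scaling rates these higher-weight generators impose on the embedded surface $V\hookrightarrow\C^N$ cannot be reconciled with a uniform metric cone structure. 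Alternatively, one can invoke the direct inner-metric analysis of cyclic quotient surfaces carried out in~\cite{BF}.
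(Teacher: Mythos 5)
Your ``if'' direction and your reduction are fine: you correctly compute that for $q\neq 1$ the invariant ring has a generator $u_1^au_2^b$ of weight $a+b<n$ while $u_1^n,u_2^n$ have weight $n$, and that Theorem~\ref{th:1} finishes the proof whenever the minimal weight is attained by a unique generator. But you have also correctly located the hard case --- two or more minimal generators tied for lowest weight, e.g.\ $n=8$, $q=5$ with $u_1u_2^3$ and $u_1^3u_2$ both of weight $4$ --- and there your proposal stops being a proof. The claim that the higher-weight generators $(0,n)$ and $(n,0)$ ``impose differing scaling rates \dots\ that cannot be reconciled with a uniform metric cone structure'' is not an argument: the mere presence of generators of strictly larger weight is perfectly compatible with metric conicality (every homogeneous germ is conical, and the paper itself leaves open whether $D_4$, whose weights are $(2,2,2,1)$ up to scaling, is conical). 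The fallback citation of \cite{BF} does not help either; that reference is credited only with exhibiting non-conical weighted homogeneous singularities, not with a classification of conical cyclic quotients.

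The paper closes this gap with a specific device you are missing: it \emph{changes the $\C^*$--action} to break the tie. Take $z_r=u_1^au_2^b$ to be the lowest-weight generator with \emph{smallest} $u_1$--exponent, and replace the diagonal action by $t\cdot(u_1,u_2)=(t^\alpha u_1,t^\beta u_2)$ with $\alpha>\beta$ coprime and $\alpha/\beta$ close to $1$. Any other generator $u_1^{a'}u_2^{b'}$ with $a'>a$ has $a'+b'\ge a+b$ and hence strictly larger $(\alpha,\beta)$--weight because $\alpha>\beta$; any generator with $a'<a$ has $a'+b'>a+b$ by the choice of $z_r$, and keeps strictly larger weight provided $\alpha/\beta$ is near enough to $1$. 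So $z_r$ becomes the \emph{unique} generator of lowest weight for the new action, the two lowest weights are now unequal, and the curve-shrinking argument proving Theorem~\ref{th:1} runs verbatim with this action. To repair your write-up you would need either this perturbation trick or some genuinely new metric argument for the tied case; as it stands, the central case of the theorem is asserted rather than proved.
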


Many non-homogeneous cyclic quotient singularities have their two lowest
weights equal, so the converse to Theorem \ref{th:1}
is not generally true.

We can also sometimes distinguish weighted homogeneous singularities
with the same topology from each other.

\begin{itheorem} \label{th:2} Let $(V,p)$ and $(W,q)$ be two weighted
  homogeneous normal surface singularities, with weights $v_1\ge
  v_2\ge \dots\ge v_r$ and $w_1\ge w_2\ge \dots\ge w_s$ respectively.
 If either $\frac{v_{r-1}}{v_r}>\frac{w_1}{w_s}$ or
  $\frac{w_{s-1}}{w_s}>\frac{v_1}{v_r}$ then
$(V,p)$ and $(W,q)$ are not bi-Lipschitz homeomorphic.
  \end{itheorem}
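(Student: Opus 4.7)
The plan is to attach to any normal surface singularity germ $(V,p)$ a real-valued bi-Lipschitz invariant $\rho(V,p)\in[1,\infty)$ and to establish the two-sided bound
\[v_{r-1}/v_r \le \rho(V,p) \le v_1/v_r\]
in the weighted-homogeneous case. With this sandwich, the theorem follows immediately: the hypothesis $v_{r-1}/v_r > w_1/w_s$ forces
\[\rho(V,p) \ge v_{r-1}/v_r > w_1/w_s \ge \rho(W,q),\]
ruling out any bi-Lipschitz equivalence, and the symmetric case $w_{s-1}/w_s>v_1/v_r$ is analogous.

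The invariant is built from contact exponents of pairs of arcs. For two distinct semi-algebraic arcs $\gamma_1,\gamma_2\colon[0,\varepsilon_0)\to V$ with $\gamma_i(0)=p$ and $d(\gamma_i(t),p)=t$, set
\[\alpha(\gamma_1,\gamma_2):=\liminf_{t\to 0^+}\frac{\log d(\gamma_1(t),\gamma_2(t))}{\log t},\]
and define $\rho(V,p):=\sup\alpha(\gamma_1,\gamma_2)$, the supremum taken over pairs lying in a common H\"older triangle at $p$ so as to be finite. Because a bi-Lipschitz map alters distances by only a bounded multiplicative constant, the logarithm of that constant contributes only $O(1)$ to $\log d$, which is negligible against $\log t\to -\infty$; hence $\rho$ is a bi-Lipschitz invariant.

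For the lower bound, choose weighted-homogeneous generators $z_1,\dots,z_r$ of $\mathcal O_{(V,p)}$ with weights $v_1\ge\dots\ge v_r$ and pick two points $a,b\in V$ on distinct $\C^*$-orbits of a single non-zero fiber of $z_r$, with $a_{r-1}\ne b_{r-1}$. Such points exist because $V$ is an irreducible surface and $z_r$ is a non-constant homogeneous function, so a generic fiber $\{z_r=c\}\cap V$ is a positive-dimensional complex subvariety, and we can pick $a,b$ in the same connected component. Set $\gamma_1(t):=s(t)\cdot a$ and $\gamma_2(t):=s(t)\cdot b$ with $s(t)\sim t^{1/v_r}$ chosen so $d(\gamma_i(t),p)=t$. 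Since $a_r=b_r$ the smallest-weight contribution to the coordinatewise difference cancels, giving outer distance
\[|\gamma_1(t)-\gamma_2(t)| \sim s(t)^{v_{r-1}}\,|a_{r-1}-b_{r-1}| \sim t^{v_{r-1}/v_r}.\]
An inner path of the same order is obtained by joining $a$ and $b$ inside $\{z_r=c\}\cap V$ and scaling it by the $\C^*$-action: constancy of $z_r$ along the path keeps all surviving weight contributions at least $v_{r-1}$, so the scaled path has length of order $s(t)^{v_{r-1}}\sim t^{v_{r-1}/v_r}$. Hence $\alpha(\gamma_1,\gamma_2)=v_{r-1}/v_r$ and $\rho(V,p)\ge v_{r-1}/v_r$.

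The upper bound $\rho(V,p)\le v_1/v_r$ is the main technical obstacle. By the curve selection lemma and Puiseux expansion, any semi-algebraic arc in $V$ approaching $p$ admits fractional-power expansions in each coordinate, and the weighted-homogeneous equations of $V$ force the leading exponents to be rationally related to the weights, so every such arc is asymptotic, to leading order, to some $\C^*$-orbit (possibly a ``fast'' one on which certain generators vanish). For two distinct arcs asymptotic to distinct orbits, the coordinatewise analysis used above, applied to the largest-weight coordinate in which the two orbits disagree, bounds the contact exponent by $v_1/v_r$. Arcs asymptotic to a common orbit peel apart in directions controlled by the tangent-space structure of $V$ along that orbit, which again bounds the exponent by $v_1/v_r$. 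Extending from semi-algebraic to Lipschitz arcs, required for bi-Lipschitz invariance, is handled by a density/approximation argument. Combined with the lower bound, this sandwich completes the proof.
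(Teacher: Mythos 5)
Your strategy is genuinely different from the paper's, but it has a fatal gap at its core: the invariant $\rho$ is not well defined (or rather, is identically $+\infty$), so the sandwich $v_{r-1}/v_r\le\rho\le v_1/v_r$ cannot hold. On \emph{any} surface germ --- even the smooth germ $(\C^2,0)$, where $v_1/v_r=1$ --- one can exhibit pairs of distinct semi-algebraic arcs with arbitrarily large contact exponent (e.g.\ $\gamma_1(t)=(t,0)$, $\gamma_2(t)=(t,t^N)$ for any $N$), and any two such arcs do bound a H\"older triangle, so your finiteness caveat does not restrict the supremum. Correspondingly, the step you flag as ``the main technical obstacle'' is not merely hard but false as argued: two arcs asymptotic to the same $\C^*$--orbit can agree to arbitrarily high order and hence ``peel apart'' at any rate whatsoever; nothing about the tangent-space structure along the orbit caps the contact exponent at $v_1/v_r$. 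The deeper issue is conceptual: the mere existence of a pair of arcs with contact exponent $v_{r-1}/v_r$ carries no bi-Lipschitz information, since such pairs exist on metric cones too. What distinguishes the non-conical case is that an \emph{essential loop} (one representing a nontrivial element of $\pi_1(V-\{p\})$) shrinks at rate $t^{v_{r-1}}$ while staying at distance $\asymp t^{v_r}$ from $p$; detecting this requires a topological input that your arc-pair invariant cannot see. (Your lower-bound computation is essentially sound, but it is the wrong quantity to bound.)

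For contrast, the paper's proof takes the loop $\gamma$ in the hyperplane section $V_0=\{z_r=1\}$ that is essential in $\pi_1(V-\{p\})$ (this is the content of Lemma \ref{le:1}), transports the shrinking family $t\gamma$ through the hypothetical $K$--bi-Lipschitz map $h$, and rescales in $W$ by $t^{-v_r/w_s}$ so that the image curves stay at distance at least $1/K$ from $q$ while their lengths are $O(t^{v_{r-1}-w_1v_r/w_s})\to 0$. This produces a null-homotopy of $\gamma$ in the punctured germ, contradicting its essentialness. If you want to pursue an arc- or zone-based invariant in the spirit of your proposal, you would need to build the topology into the definition (roughly: the supremum of shrink rates of \emph{homotopically essential} families), which is a substantially more elaborate construction than the one you sketch.
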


\begin{icorollary} Let $V,W\subset\C^3$ be defined by
  $V=\{(z_1,z_2,z_3)\in\C^3~:~z_1^{2}+z_2^{51}+z_3^{102}=0\}$ and
  $W=\{(z_1,z_2,z_3)\in\C^3~:~z_1^{12}+z_2^{15}+z_3^{20}=0\}$.
  Then, the germs $(V,0)$ and $(W,0)$ are homeomorphic, but they are
  not bi-Lipschitz homeomorphic.
\end{icorollary}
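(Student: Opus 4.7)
The corollary splits into two independent claims. For the non-bi-Lipschitz equivalence, I compute the weights: the relation $2v_1=51v_2=102v_3$ gives ordered minimal weights $(v_1,v_2,v_3)=(51,2,1)$ for $V$ with weighted degree $d=102$, and $12w_1=15w_2=20w_3$ gives $(w_1,w_2,w_3)=(5,4,3)$ for $W$ with degree $60$. Then $v_{r-1}/v_r=2>5/3=w_1/w_s$, so Theorem \ref{th:2} applies and the two germs are not bi-Lipschitz homeomorphic.

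For the homeomorphism of germs, since a normal surface singularity is topologically classified by its resolution graph, I would show $V$ and $W$ have the same minimal good resolution. The strategy is to verify that in both cases the $\mathbb{C}^*$-action is free on the punctured variety, so the link is a principal $S^1$-bundle over a smooth compact Riemann surface $B$ and the minimal good resolution consists of a single smooth curve of genus $g(B)$ and self-intersection $-e$, where $e$ is the Euler number of the bundle, and then to match the pair $(g,e)$ for the two examples. Freeness is routine: the $\mathbb{C}^*$-stabilizer at a point with $z_i\neq 0$ exactly for $i\in S$ is cyclic of order $\gcd\{v_i:i\in S\}$; for both weight vectors $(51,2,1)$ and $(5,4,3)$ all such gcds are $1$, and the coordinate axes do not meet the punctured varieties since each defining equation forces any lone nonzero coordinate to vanish.

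For $g(B)$, use the finite cover $(z_1,z_2,z_3)\mapsto(z_1^{a_1},z_2^{a_2},z_3^{a_3})$, which descends to a branched covering $B\to\{w_1+w_2+w_3=0\}\cong\mathbb{P}^1$ of degree $a_1a_2a_3/d$. For each $i$, writing $\{j,k\}=\{1,2,3\}\setminus\{i\}$, the factorization of $z_j^{a_j}+z_k^{a_k}$ into $\gcd(a_j,a_k)$ irreducible factors shows there are $\gcd(a_j,a_k)$ preimages of the point $w_i=0$, each with ramification index $a_i$ (verified by a local-slice computation). Riemann--Hurwitz then gives
\[
2g-2 \;=\; -\frac{2\,a_1 a_2 a_3}{d} + \sum_{i=1}^{3}\gcd(a_j,a_k)(a_i-1),
\]
which evaluates to $-204+(51+100+101)=48$ for $(2,51,102)$ and $-120+(55+56+57)=48$ for $(12,15,20)$; in both cases $g=25$. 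For $e$, the curve $B$ avoids the orbifold points of $\mathbb{P}(v_1,v_2,v_3)$ (immediate from the equations), so the $\mathbb{C}^*$-bundle $V\setminus\{0\}\to B$ is the restriction of the tautological bundle and has Euler number $e=d/(v_1v_2v_3)$, equal to $102/102=60/60=1$ in both cases. Hence the minimal good resolutions coincide---both consist of a single smooth curve of genus $25$ and self-intersection $-1$---and the germs are homeomorphic.

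The main obstacle is the ramification computation above. I would settle it by a direct local calculation: at each branch of $V\cap\{z_i=0\}$, one picks a transverse slice of the free $\mathbb{C}^*$-action, parametrizes the slice by $z_i$, and reads the local form of the cover as $z_i\mapsto z_i^{a_i}$ up to a unit.
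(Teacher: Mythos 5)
Your proposal is correct and takes essentially the same route as the paper, whose entire proof is the observation that the links are the same $S^1$-bundle (so the germs are cones on diffeomorphic links) together with the weight computation $(51,2,1)$, $(5,4,3)$ and $\frac21>\frac53$ feeding into Theorem~\ref{th:2}; you have merely supplied the Riemann--Hurwitz and Euler-number details the paper leaves implicit. One remark: your genus count of $25$ is in fact the correct value (e.g.\ the base for $V$ is the hyperelliptic curve $z_1^2=-(1+z_2^{51})$, branched over $52$ points of $\mathbb{P}^1$), whereas the paper states genus $26$ --- a harmless slip, since only the equality of the two genera and Euler classes is needed.
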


The corollary follows because in both cases the link of the
singularity is an $S^1$ bundle of Euler class $-1$ over a curve of
genus 26;  the weights are $(51,2,1)$ and $(5,4,3)$ respectively and
Theorem 2 applies since $\frac21>\frac53$.

\smallskip The idea of the proof of Theorem \ref{th:1} is to find an
essential closed curve in $V-\{p\}$ with the property that as we
shrink it towards $p$ using the $\R^*$ action, its diameter shrinks
faster than it could if $V$ were bi-Lipschitz equivalent to a
cone. Any essential closed curve in $V-\{p\}$ that lies in the
hyperplane section $z_r=1$ will have this property, so we must show
that the hyperplane section contains such curves. The proofs of
Theorems \ref{th:1.5} and \ref{th:2} are similar.

\section{Proofs}

Let $z_1,\dots,z_r$ be a minimal set of homogeneous generators of
the graded ring of $V$, with $z_i$ of weight $v_i$ and $v_1\ge
v_2\ge\dots v_{r-1}\ge v_r$. Then $x\mapsto (z_1(x),\dots,z_r(x))$
embeds $V$ in $\C^r$. This is a $\C^*$--equivariant embedding for
the $\C^*$--action on $\C^r$ given by
$z(z_1,\dots,z_r)=(z^{v_1}z_1,\dots,z^{v_r}z_r)$

Consider the subset $V_0:=\{x\in V~|~z_r(x)=1\}$ of $V$. This is a
nonsingular complex curve.

\begin{lemma}\label{le:1} Suppose 
  $(V,p)$ is not a homogeneous cyclic quotient singularity. Then for any
  component $V'_0$ of $V_0$ the map $\pi_1(V'_0)\to \pi_1(V-\{p\})$ is
  non-trivial.
\end{lemma}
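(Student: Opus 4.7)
The plan is to push the question to the base 2-orbifold $B = (V - \{p\})/\C^*$ of the Seifert fibration of the link. The orbit map $q\colon V-\{p\}\to B$ fits into a central extension
\[
\langle f\rangle\to \pi_1(V-\{p\})\to\pi_1^{\rm orb}(B)\to 1,
\]
where $f$ is the class of a generic Seifert fiber. Since $z_r$ has weight $v_r$, the subgroup $\mu_{v_r}\subset\C^*$ preserves $V_0$ setwise, and with $d=\gcd(v_1,\dots,v_r)$ equal to the order of the generic $\C^*$-stabilizer, a generic $\C^*$-orbit meets $V_0$ in $v_r/d$ points. Hence $q$ restricts to a regular orbifold cover $V_0\to B^\circ := B\setminus q(\{z_r=0\})$ of degree $v_r/d$, where $B^\circ$ is the complement of a finite set in $B$.

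It suffices to show that for any component $V_0'$ the induced map $\pi_1(V_0')\to\pi_1^{\rm orb}(B)$ is non-trivial, since it factors through $\pi_1(V-\{p\})$. This map factors as
\[
\pi_1(V_0')\hookrightarrow\pi_1^{\rm orb}(B^\circ)\twoheadrightarrow\pi_1^{\rm orb}(B),
\]
the first arrow an index-$v_r/d$ inclusion and the second a surjection with kernel $K$ normally generated by (powers of) small loops around the punctures of $B^\circ$. Non-triviality of the image is equivalent to $\pi_1(V_0')\not\subset K$. If $\pi_1^{\rm orb}(B)$ is infinite then $K$ has infinite index in $\pi_1^{\rm orb}(B^\circ)$ while $\pi_1(V_0')$ has finite index, so this holds automatically.

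The remaining finite case is the main obstacle. By the classification of 2-orbifold bases of singularity links, $\pi_1^{\rm orb}(B)$ is finite exactly when $(V,p)=\C^2/G$ is a quotient singularity, and trivial only when $B$ is the smooth $S^2$, yielding the link $L(n,1)$ of a homogeneous cyclic quotient, which is excluded by hypothesis. For $G$ non-cyclic (the Kleinian $D_k,E_6,E_7,E_8$) the paper's weights give $d=1$ and $v_r/d\in\{2,3,4,6\}$, while $|\pi_1^{\rm orb}(B)|\in\{2(k-2),12,24,60\}$, so $|\pi_1^{\rm orb}(B)|\nmid v_r/d$ and $\pi_1(V_0')\not\subset K$. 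For $G=\mu_n$ cyclic with $q\ne 1$, $B=S^2$ has two cone points of order $m=n/\gcd(n,q-1)$ and $\pi_1^{\rm orb}(B)=\Z/m$; the invariant monomial $u_1 u_2^{n-q}$ has weight $n-q+1<n$, so $v_r/d\le(n-q+1)/d<n/d=m$, whence $m\nmid v_r/d$. In each subcase the required non-containment follows, and the argument reduces to the strict inequality $v_r/d<|\pi_1^{\rm orb}(B)|$ that characterises precisely the non-homogeneous cyclic quotient singularities.
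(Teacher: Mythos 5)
Your overall strategy---pass to the base orbifold $B=(V-\{p\})/\C^*$, view $V_0\to B^\circ$ as a finite orbifold cover, and compare the index of $\pi_1(V'_0)$ in $\pi_1^{orb}(B^\circ)$ with the index of the kernel $K$ of $\pi_1^{orb}(B^\circ)\to\pi_1^{orb}(B)$---is the paper's strategy, and your treatment of the infinite case and of the non-homogeneous cyclic quotient case (via the invariant monomial $u_1u_2^{n-q}$ of degree $n-q+1<n$) is sound. The genuine gap is in the finite case. Your reduction only reaches ``$\pi_1^{orb}(B)$ finite,'' which leaves you with \emph{all} quotient singularities $\C^2/G$ with $G\subset U(2)$ finite acting freely; but your case analysis for non-cyclic $G$ treats only the Kleinian groups $G\subset SU(2)$ (the $D_k$, $E_6$, $E_7$, $E_8$ of the paper's table). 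There are many more non-cyclic finite subgroups of $U(2)$ acting freely on $\C^2-\{0\}$ (for instance a binary polyhedral group extended by a cyclic group of scalars of coprime order, and various metacyclic groups), whose quotients are weighted homogeneous with base orbifold $S^2(2,2,n)$, $S^2(2,3,3)$, $S^2(2,3,4)$ or $S^2(2,3,5)$ but with weights different from those in the table; for these you have not verified that $|\pi_1^{orb}(B)|$ fails to divide $v_r/d$, so the parenthetical ``(the Kleinian $D_k,E_6,E_7,E_8$)'' conceals an unproved claim.

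The missing idea, which the paper uses and which repairs this at a stroke, is that the cover $V_0\to B^\circ$ is not merely regular of degree $v_r/d$ but \emph{cyclic}: its deck group is a quotient of $\mu_{v_r}$, and hence each component $V'_0\to B^\circ$ is also a cyclic cover. Consequently the image of $\pi_1(V'_0)$ in $\pi_1^{orb}(B)$ is a normal subgroup with cyclic quotient, so if that image were trivial then $\pi_1^{orb}(B)$ itself would be cyclic. This disposes of every non-cyclic finite group (dihedral, tetrahedral, octahedral, icosahedral) without any computation of weights, and forces $B$ to be a sphere with at most two cone points, i.e.\ $(V,p)$ to be a cyclic quotient singularity---exactly the case you then handle correctly. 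I suggest replacing your index-counting in the non-cyclic finite case by this normality argument; the rest of your proof then goes through.
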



\begin{proof}
  Denote $v=lcm(v_1,\dots,v_r)$.  A convenient version of the link of
  the singularity is given by
  \[M=S\cap V \quad\text{with}\quad S=\{z\in
  \C^r~|~|z_1|^{2v/v_1}+\dots+|z_r|^{2v/v_r}=1\}\,.\] The action of
  $S^1\subset \C^*$ restricts to a fixed-point free action on $M$. If
  we denote the quotient $M/S^1=(V-\{p\})/\C^*$ by $P$ then the orbit
  map $M\to P$ is a Seifert fibration, so $P$ has the structure of an
  orbifold. The orbit map induces a surjection of
  $\pi_1(V-\{p\})=\pi_1(M)$ to the orbifold fundamental group
  $\pi_1^{orb}(P)$ (see eg \cite{NR, Scott}) so the lemma will follow
  if we show the image of $\pi_1(V'_0)$ in $\pi_1^{orb}(P)$ is
  nontrivial.

  Denote $V_r:=\{z\in V~|~z_r\ne 0\}$ and $P_r:=\{[z]\in P~|~z_r\ne
  0\}$ and $\pi\colon V\to P$ the projection. Each generic orbit of
  the $\C^*$--action on $V_r$ meets $V_0$ in $v_r$ points; in fact
  the $\C^*$--action on $V_r$ restricts to an action of $\mu_{v_r}$
  (the $v_r$--th roots of unity) on $V_0$, and
  $V_0/\mu_{v_r}=V_r/\C^*=P_r$.  Thus $V_0\to P_r$ is a cyclic cover
  of orbifolds, so the same is true for any component $V'_0$ of $V_0$.
  Thus $\pi_1(V'_0)\to \pi_1^{orb}(P_r)$ maps $\pi_1(V'_0)$
  injectively to a normal subgroup with cyclic quotient.
  On the other hand $\pi_1^{orb}(P_r)\to \pi_1^{orb}(P)$ is
  surjective, since $P_r$ is the complement of a finite set of points
  in $P$. Hence, the image of $\pi_1(V'_0)$ in $\pi_1^{orb}(P)$ is a
  normal subgroup with cyclic quotient. Thus the lemma follows if
  $\pi_1^{orb}(P)$ is not cyclic.

  If $\pi_1^{orb}(P)$ is cyclic
  then $P$ is a $2$--sphere with at most two orbifold points, so the
  link $M$ must be a lens space, so $(V,p)$ is a cyclic quotient
  singularity, say $V=\C^2/\mu_n$. Here $\mu_n$ acts on $\C^2$ by
  $\xi(u_1,u_2)=(\xi^q u_1, \xi u_2)$ with $\xi=e^{2\pi i/n}$, for some
  $0<q<n$ with $q$ prime to $n$.

  Recall that we are using the diagonal $\C^*$--action. The base
  orbifold is then $(\C^2/\mu_n)/\C^*=(\C^2/\C^*)/\mu_n=
  P^1\C/\mu_n$. Note that $\mu_n$ may not act effectively on $P^1\C$;
  the kernel of the action is
  \begin{align*}
    \mu_n\cap\C^*&=\{(\xi^{qa},\xi^a~)|~ \xi^{qa}=\xi^a\}  \\
    &=\{(\xi^{qa},\xi^a~)|~ \xi^{(q-1)a}=1\}  \\
    &= \mu_{d}\quad \text{with }d=\gcd(q-1,n)\,.
  \end{align*}
  So the actual action is by a cyclic group of order $n':=n/d$ and the
  orbifold $P$ is $P^1\C/(\Z/n')$, which is a $2$--sphere with two
  degree $n'$ cone points.

  The ring of functions on $V$ is the ring of invariants for the
  action of $\mu_n$ on $\C^2$, which is generated by functions of the
  form $u_1^au_2^b$ with $qa+b\equiv 0$ (mod $n$). The minimal set of
  generators is included in the set consisting of $u_1^n$, $u_2^n$,
  and all $u_1^au_2^b$ with $qa+b\equiv 0$ (mod $n$) and $0<a,b<n$. If
  $q=1$ these are the elements $u_1^au_2^{n-a}$ which all have equal
  weight, and $(V,p)$ is homogeneous and a cone; this case is excluded by
  our assumptions. If $q\ne 1$ then a generator of least weight will
  be some $u_1^au_2^b$ with $a+b<n$. Then $V_0$ is the subset of $V$
  given by the quotient of the set $\bar V_0=\{(u_1,u_2)\in
  \C^2~|~u_1^au_2^b=1\}$ by the $\mu_n$--action.  Each fiber of the
  $\C^*$--action on $\C^2$ intersects $\bar V_0$ in exactly $a+b$
  points, so the composition $\bar V_0\to \C^2-\{0\}\to P^1\C$ induces
  an $(a+b)$--fold
  covering $\bar V_0\to P^1\C-\{0,\infty\}$. Note that $d=\gcd(q-1,n)$
  divides $a+b$ since $a+b=(qa+b)-(q-1)a=nc-(q-1)a$ for some $c$.
  Hence the subgroup $\mu_d=\mu_n\cap\C^*$ is in the covering
  transformation group of the above covering, so the covering $V_0\to
  P_0$ obtained by quotienting by the $\mu_n$--action has degree at
  most $(a+b)/d$. Restricting to a component $V'_0$ of $V_0$ gives us
  possibly smaller degree. Since $(a+b)/d<n/d=n'$, the image of
  $\pi_1(V'_0)$ in $\pi_1(P)=\Z/n'$ is non-trivial, completing the
  proof.
\end{proof}

\begin{proof}[Proof of Theorem \ref{th:1}]
  Assume $v_{r-1}/v_r>1$.  By Lemma \ref{le:1} we can find a closed
  curve $\gamma$ in $V_0$ which represents a non-trivial element of
  $\pi_1(V-\{p\})$. Suppose we have a bi-Lipschitz homeomorphism $h$
  from a neighborhood of $p$ in $V$ to a neighborhood in the cone
  $CM$. Using the $\Rplus$--action on $V$, choose $\epsilon>0$ small
  enough that  $t\gamma$
  is in the given neighborhood of $p$ for $0<t\le \epsilon$.

  Consider the map $H$ of $[0,1]\times (0,\epsilon]$ to $V$ given by
  $H(s,t)=t^{-v_r}h(t\gamma(s))$.  Here $t\gamma(s)$ refers to the
  $\Rplus$--action on $V$, and $t^{-v_r}h(v)$ refers to the
  $\Rplus$--action on $CM$.  Note that the coordinate $z_r$ is
  constant equal to $ t^{v_r}$ on each $t\gamma$ and the other
  coordinates have been multiplied by at most $t^{v_{r-1}}$.  Hence,
  for each $t$ the curve $t\gamma$ is a closed curve of length of
  order bounded by $t^{v_{r-1}}$, so $h(t\gamma)$ has length of the
  same order, so $t^{-v_r}h(t\gamma)$ has length of order
  $t^{v_{r-1}-v_r}$.  This length approaches zero as $t\to 0$, so $H$
  extends to a continuous map $H'\colon[0,1]\times [0,\epsilon]\to V$
  for which $H([0,1]\times\{0\})$ is a point.  Note that
  $t^{-v_r}h(t\gamma)$ is never closer to $p$ than distance $1/K$,
  where $K$ is the bi-Lipschitz constant of $h$, so the same is true
  for the image of $H'$. Thus $H'$ is a null-homotopy of
  $\epsilon\gamma$ in $V-\{p\}$, contradicting the fact that $\gamma$
  was homotopically nontrivial.
\end{proof}
\begin{proof}[Proof of Theorem \ref{th:1.5}]
  Suppose $(V,p)$ is a non-homogeneous cyclic quotient singularity, as
  in the proof of Lemma \ref{le:1} and suppose Theorem \ref{th:1} does
  not apply, so the two lowest weights are equal (in the notation of
  that proof this happens, for example, if $n=4k$ and $q=2k+1$ for
  some $k> 1$: the generators of the ring of functions of lowest
  weight are $u_1u_2^{2k-1}, u_1^3u_2^{2k-3}, \dots$, of weight $2k$).
  Let $u_1^au_2^b$ be the generator of lowest weight that has smallest
  $u_1$--exponent and choose this one to be the coordinate $z_r$ in
  the notation of Lemma \ref{le:1}. Consider now the $C^*$--action
  induced by the action $t(u_1,u_2) = (t^\alpha u_1,t^\beta u_2)$ on
  $\C^2$ for some pairwise prime pair of positive integers
  $\alpha>\beta$. With respect to this $\C^*$--action the weight
  $\alpha a'+\beta b'$ of any generator $u_1^{a'}u_2^{b'}$ with $a'>a$
  will be greater that the weight $\alpha a+\beta b$ of $z_r$ (since
  $a'+b'\ge a+b$, which implies $\alpha a'+\beta b'=\alpha a +
  \alpha(a'-a)+\beta b'>\alpha a + \beta(a'-a)+\beta b'\ge \alpha
  a+\beta b$). On the other hand, any generator $u_1^{a'}u_2^{b'}$
  with $a'<a$ had $a'+b'>a+b$ by our choice of $z_r$, and if
  $\alpha/\beta$ is chosen close enough to $1$ we will still have
  $\alpha a'+\beta b'>\alpha a+\beta b$, so it will still have larger
  weight than $z_r$. Thus $z_r$ is then the unique generator of lowest
  weight, so we can carry out the proof of Theorem \ref{th:1} using
  this $\C^*$--action to prove non-conicalness of the singularity.
\end{proof}
\begin{proof}[Proof Theorem \ref{th:2}] Let $h\colon (V,p)\rightarrow
  (W,q)$ be a $K$--bi-Lipschitz homeomorphism.  Let us suppose that
  $\frac{v_{r-1}}{v_r}>\frac{w_1}{w_s}$.  Let $\gamma$ be a loop in
  $V_0$ representing a non-trivial element of $\pi_1(V-\{p\})$ (see
  Lemma \ref{le:1}). We choose $\epsilon$ as in the previous proof.
  For $t\in (0,\epsilon]$ consider the curve $t\gamma$, where
  $t\gamma$ refers to $\Rplus$-action on $V$.  Its length
  $l(t\gamma)$, considered as a function of $t$, has the order bounded
  by $t^{v_{r-1}}$. The distance of the curve $t\gamma$ from $p$ is of
  order $t^{v_r}$.  Since $h$ is a bi-Lipschitz map, we obtain the
  same estimates for $h(t\gamma)$.  Since the smallest weight for $W$
  is $w_s$, the curve $t^{-v_r/w_s}h(t\gamma)$ will be distance at
  least $1/K$ from $p$.  Moreover its length will be of order at most
  $t^{-w_1v_r/w_s}l(t\gamma)$ which is of order
  $t^{v_{r-1}-w_1v_r/w_s}$. This approaches zero as $t\to 0$ so, as in
  the previous proof, we get a contradiction to the non-triviality of
  $[\gamma]\in\pi_1(V-\{p\})=\pi_1(W-\{q\})$. By exchanging the roles
  of $V$ and $W$ we see that $\frac{w_{s-1}}{w_s}>\frac{v_1}{v_r}$
  also leads to a contradiction.
\end{proof}

\end{document}